
\documentclass{article}

\RequirePackage[T1]{fontenc}
\usepackage{microtype}




\usepackage{amsthm,amssymb}
\newtheorem{theorem}{Theorem}[section]

\newtheorem{lemma}[theorem]{Lemma}
\newtheorem{proposition}[theorem]{Proposition}

\theoremstyle{remark}
\newtheorem{remark}[theorem]{Remark}


\usepackage{amsmath,latexsym,bm}
\usepackage[numbers]{natbib}
\usepackage{enumitem}
\usepackage{mathtools}




\newcommand{\be}{\begin{eqnarray*}}
\newcommand{\bel}{\begin{eqnarray}}
\newcommand{\ee}{\end{eqnarray*}}
\newcommand{\eel}{\end{eqnarray}}
\newcommand{\ba}{\begin{aligned}}
\newcommand{\ea}{\end{aligned}}

\newcommand{\pa}{\partial}

\newcommand{\grad}{\nabla}
\newcommand\N{{\mathbb N}}
\newcommand\R{{\mathbb R}}
\newcommand\T{{\mathbb T}}

\newcommand{\RR}{\mathbb{R}}
\newcommand{\ZZ}{\mathbb{Z}}
\newcommand{\NN}{\mathbb{N}}
\newcommand{\TT}{\mathbb{T}}






\usepackage{xspace}

\makeatletter
\newcommand{\GI@st}[1]{\nonscript\:#1\vert\nonscript\:\mathopen{}\allowbreak}
\newcommand{\st}[1][]{\GI@st{#1}}
\DeclarePairedDelimiterX\set[1]\{\}{%
  \renewcommand{\st}{\GI@st{\delimsize}}#1%
}
\makeatother
\DeclarePairedDelimiter{\norm}{\lVert}{\rVert}

\DeclarePairedDelimiter{\paren}{(}{)}

\renewcommand{\leq}{\leqslant}
\renewcommand{\geq}{\geqslant}

\newcommand{\loc}{\operatorname{loc}}
\newcommand{\dv}{\nabla\cdot}
\newcommand{\one}{\bm{1}}


\usepackage[final,colorlinks]{hyperref}
\usepackage{doi}

\begin{document}

\title{Growth of Sobolev norms and loss of regularity in transport equations%
\footnotetext{%
  G. Crippa was partially supported by the ERC Starting Grant 676675 FLIRT.
  The remaining authors were partially supported by the US National Science Foundation through grants
    DMS 2043024 and DMS 2124748 to T. Elgindi,
    DMS 1814147 and DMS 2108080 to G. Iyer,
    and
    DMS 1909103 to A. Mazzucato.
}}

\author{
Gianluca Crippa\thanks{
  Department of Mathematics and Computer Science, University of Basel, Spiegelgasse 1, 4051 Basel, Switzerland}
\and
Tarek Elgindi\thanks{
  Mathematics Department, Duke University, 120 Science Drive, Durham, NC 27708-0320, U.S.A.}
\and
Gautam Iyer\thanks{
Department of Mathematical Sciences, Carnegie Mellon University, Pittsburgh, PA 15213, U.S.A.}
\and
Anna L. Mazzucato\thanks{
Mathematics Department, Penn State University, University Park, PA, 16802, U.S.A.}
}





\maketitle

\begin{abstract}
We consider transport of a passive scalar advected by an irregular divergence free vector field.
Given any non-constant initial data~$\bar \rho \in H^1_\text{loc}(\R^d)$, $d\geq 2$, we construct a divergence free advecting velocity field~$v$ (depending on $\bar \rho$) for which the unique weak solution to the transport equation does not belong to $H^1_{\loc}(\R^d)$ for any positive positive time.
The velocity field~$v$ is smooth, except at one point, controlled uniformly in time, and belongs to almost every Sobolev space $W^{s,p}$ that does not embed into the Lipschitz class.
The velocity field $v$ is constructed by pulling back and rescaling an initial data dependent sequence of sine/cosine shear flows on the torus.
This loss of regularity result complements that in {\em  Ann. PDE}, 5(1):Paper No. 9, 19, 2019.
\end{abstract}


\begin{centering}

\bigskip

{\em In memory of Charles ``Charlie'' Doering.}


\end{centering}

\section{Introduction} \label{s:intro}

\typeout{\linewidth=\the\linewidth}
This article concerns the effect of transport by an irregular vector field on a passive scalar. In what follows, we refer to {\em irregular transport} as transport by a vector field that does not possess Lipschitz regularity in the space variable.




 It is well known that, if the advecting vector field is Lipschitz uniformly in time, the Cauchy-Lipschitz theory applies and the flow is well-defined pointwise in space and time. The flow and its inverse are then also Lipschitz and Lipschitz regularity of the initial data is preserved under the action of the flow.
 In this case, the unique solution to the linear transport equation is obtained by composing the initial data with the inverse of the flow map.

 The regularity of weak solutions to the transport equation with an irregular advecting velocity field has been extensively studied by many authors (see for instance~\cite{DiPernaLions89,Ambrosio04,LeBrisLions04,CDL08.2,AmbrosioCrippa14,BrueNguyen21,BrueNguyen21a}).
 In this work, we are interested in loss of regularity for the weak solution of the transport equation, when the advecting vector field is ``almost Lipschitz'' in space.
 Our main result constructively shows that for any non-constant, $H^1_\text{loc}(\R^d)$ initial data there is a bounded, compactly supported, divergence-free vector field, which  is (uniformly in time) almost Lipschitz in space, such that the solution to the associated irregular transport equation loses its $H^1$ regularity instantaneously.
More precisely, fix any (non-constant) initial data in $H^1_\text{loc}(\RR^d)$, with $d\geq 2$.
We produce a bounded, compactly supported, divergence-free vector field, depending on the initial data,  which is (uniformly in time) in the Sobolev space $W^{r, p}$, for every $r \geq 0$ and $p \in [1, \infty)$ such that $r < 1 + d/p$.
(We recall $r = 1 + d/p$ is the critical threshold for the Sobolev embedding, threshold  above which $W^{r,p}$ embeds into the Lipschitz space $W^{1, \infty}$.)
Moreover, the vector field is constructed so that the solution of the associated transport equation is not in $H^1_\text{loc}$ for any $t > 0$.
The loss of regularity is due to an amplification effect on the derivative of the solution by the action of the advecting flow.

To fix notation, we denote the passive scalar by $\rho=\rho(x,t)$, with $t\geq 0$, $x=(x_1,x_2, \ldots, x_d)\in \RR^d$, and the advecting field by $v=v(x,t)$.
We assume $\rho$ is a weak solution of the linear transport equation:
\begin{equation} \label{eq:transport}
  \pa_t \rho + v\cdot\nabla \rho=0,
\end{equation}
on $\RR^d\times [0,\infty)$, with initial data $\bar\rho(x)$.


The function spaces mentioned above follow standard notation.
Namely, for $k\in \ZZ_+$ and $1\leq p\leq \infty$, the space $W^{k,p}(\RR^d)$ is the Sobolev space defined by
\[
    W^{k,p}(\R^d)=\{ f\in L^p(\RR^d)\;\mid\; \pa^\alpha f\in L^p(\RR^d), \; |\alpha|\leq k \},
\]
where we have used the multi-index formalism for derivatives.
When $r > 0$ is not an integer $W^{r,p}$ denotes the fractional Sobolev space which is defined by interpolation (see~\cite{Adams} for a comprehensive introduction).
For $p=2$, the space $W^{r,2}$ coincides with the space $H^{r}$, defined via the Fourier Transform.
\smallskip

The loss of regularity result presented here extends the results by some of the authors in \cite{ACMReg19}. There, it was proved that there exists a smooth, compactly supported initial data $\Bar\rho$ and a vector field $v\in L^\infty([0,\infty);W^{1,p}(\R^d))$, for $1< p <\infty$, such that the weak solution $\rho$ of \eqref{eq:transport} does not belong to $H^s$ for any $s>0$ instantaneously in time.
In contrast, we are able to show loss of regularity for all non-constant initial data in $H^1_{\text{loc}}$ (with $v$ depending on the initial data), but we can only prove $\rho(\cdot,t)\notin H^s_{\text{loc}}$, for any $s\geq 1$ and for all $t>0$.
(We also mention that in~\cite{GG20}, the authors prove, non-constructively,  that loss of regularity is a generic phenomenon in the sense of Baire's Category Theorem.)

In both~\cite{ACMReg19} and this work, we construct at the same time the vector field $v$ and the advected scalar
$\rho$ via an iterative procedure starting from a pair  $u^0$, $\theta^0$ (where $\theta^0$ solves the transport equation with advecting field $u^0$)
which acts as a building block, and applying a suitable sequence of rescalings, where each rescaling produces a pair $u^n$, $\theta^n$.   In \cite{ACMReg19}, $u^0$ is a vector field that mixes a certain initial tracer configuration optimally in time, and one can control the growth of the $H^s$ norm of $\theta^0$ from below for all $s>0$ via interpolation, since $u^0$ drives all negative Sobolev norms of the tracer to zero exponentially fast.
The action of each rescaling is to accelerate the growth of the $H^s$-norms of $\theta^n$ as $n\to \infty$. The different $u^n$ and $\theta^n$ are combined to give rise to the vector field $v$ and associated weak solution $\rho$ of \eqref{eq:transport}, the Sobolev norms of which blow up for any $t>0$. This result is optimal from the point of view of the loss of regularity, in the sense that the only regularity that is propagated generically by a velocity field with the same regularity as $v$ is essentially a ``logarithm'' of a derivative \cite{BN20,CDL08}. We mention also the related work \cite{Jab16}, where the author gives an example of a divergence-free vector field in $H^1$ such that its flow is not in any Sobolev space with positive regularity. His construction is random at its core, while the one in \cite{ACMReg19} is deterministic and explicit.

In this note, we also use a suitable sequence of rescalings of basic flows. These flows are constructed in such a way to lead to growth in time of the $H^1$ Sobolev norm of any initial data for the passive scalar. Although the vector field depends on the initial data, it enjoys universal bounds. The vector fields are constructed using shear flows and, after rescaling, the growth happens on certain cubes that depend on the initial data $\Bar\rho$ for \eqref{eq:transport}.

Our main result is the following.

\begin{theorem} \label{t:mainThm}
 Let $\bar\rho\in H^1_{\text{loc}}(\RR^d)$, $d\geq 2$, be a non-constant function.
 There exists a compactly supported divergence-free vector field $v\in L^\infty([0,\infty) \times \RR^d)$, depending on $\bar\rho$, such that the following hold:
 \begin{enumerate}[label=(\alph*),ref=(\alph*),topsep=\smallskipamount,itemsep=\smallskipamount]
  \item\label{i:mainThm1}
    The velocity field~$v$ is smooth except at one point in $\RR^d$.
    Moreover,
    \begin{equation*}
      v \in L^\infty([0,\infty);W^{r,p}(\RR^d))
      \qquad\text{for every }
      p \in [1, \infty)\,,
      \text{ and }
      r < \frac{d}{p} + 1\,.
    \end{equation*}

  \item The unique weak solution of~\eqref{eq:transport} in  $L^\infty([0,\infty);L^2_{\text{loc}}(\RR^d))$ with initial data $\Bar\rho$ is such that
    \begin{equation*}
      \rho(\cdot, t) \not \in H^1_{\text{loc}}(\R^d)
      \qquad
      \text{for every }
      t > 0 \,.
    \end{equation*}
 \end{enumerate}
\end{theorem}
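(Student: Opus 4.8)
The plan is to construct $v$ as a superposition of rescaled shear flows, localized on a sequence of disjoint cubes shrinking to a single point, chosen so that on each cube the flow amplifies the $H^1$ seminorm of $\bar\rho$ by an unbounded factor while costing only a controlled amount of regularity.

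\emph{Step 1: the building block.} First I would fix a single model shear flow on the torus $\T^2$ (extended trivially in the remaining $d-2$ variables), of the form $u(x,y,t) = (b(t) \sin(ky), 0)$ with $k$ large, acting for a unit time. Composing with the explicit flow map $(x,y)\mapsto (x - t\,b\cdot\text{(something)}\,\sin(ky), y)$ shows that the gradient of the transported scalar $\theta(\cdot,t) = \bar\rho\circ(\text{flow})^{-1}$ picks up a factor proportional to $tk\|b\|$ in the $x$-derivative, coming from the $\partial_y$ term hitting $\sin(ky)$. The key point is that since $\bar\rho$ is non-constant, there is a cube on which $\partial_y \bar\rho$ (or some rotated directional derivative) has a nonzero $L^2$ mass that is not destroyed by this shear; averaging over the torus in the fast variable shows the $H^1$ seminorm of $\theta^n$ grows at least linearly in the chosen large frequency $k_n$.

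\emph{Step 2: rescaling and assembling $v$.} Next I would introduce a sequence of cubes $Q_n$ centered at points converging to $x_* = 0$, with side lengths $\ell_n \to 0$, and pull back the $n$-th model flow to $Q_n$ via an affine rescaling $x \mapsto (x - x_n)/\ell_n$, multiplying the amplitude by a factor $a_n$ and choosing the internal frequency $k_n \to \infty$. The rescaled field $v$ is then defined to equal these rescaled shear flows on the $Q_n$ and zero elsewhere; it is divergence-free and compactly supported by construction, smooth away from $x_*$, and bounded in $L^\infty$ provided $a_n$ stays bounded (we take $a_n$ constant). For the Sobolev bound in part \ref{i:mainThm1}, I would compute $\|v\|_{W^{r,p}(Q_n)}$: a shear flow $\sin(k_n y/\ell_n)$ on a cube of size $\ell_n$ has $W^{r,p}$ norm scaling like $(k_n/\ell_n)^r \ell_n^{d/p}$, so the sum over $n$ converges precisely when $r < d/p + 1$, after choosing $\ell_n$ decaying fast enough relative to $k_n$ (e.g. $k_n$ and $\ell_n$ tuned so that $(k_n/\ell_n)^r \ell_n^{d/p}$ is summable for all subcritical $r$, which is possible because the critical exponent leaves a logarithmic-type room — one picks $\ell_n = 2^{-n}$, $k_n = n$, say, and checks the geometric series).

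\emph{Step 3: loss of regularity.} Finally, for part (b) I would first invoke the standard well-posedness theory (DiPerna--Lions / Ambrosio, applicable since $v$ is bounded, divergence-free, and $W^{1,1}_{\loc}$ away from a single point, with the transported scalar in $L^\infty_t L^2_{\loc}$) to get the unique weak solution $\rho$, and observe that on each cube $Q_n$, which is invariant under the flow (the shear preserves $Q_n$), $\rho$ restricted to $Q_n$ coincides with the rescaled transported model scalar. Then the lower bound from Step 1, transported through the rescaling, gives $\|\nabla \rho(\cdot,t)\|_{L^2(Q_n)} \gtrsim c(t)\, k_n / \ell_n \cdot \ell_n^{d/2} \cdot (\text{mass of } \nabla\bar\rho \text{ on } Q_n)$; by choosing the amplification $k_n$ growing fast enough relative to the (possibly small) local mass of $\nabla\bar\rho$ on $Q_n$ and relative to $\ell_n^{d/2}$, this tends to $+\infty$ as $n\to\infty$, for every fixed $t>0$. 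Since the $Q_n$ are disjoint, $\sum_n \|\nabla\rho(\cdot,t)\|_{L^2(Q_n)}^2 = \infty$, so $\rho(\cdot,t)\notin H^1_{\loc}$ near $x_*$.

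\emph{Main obstacle.} The delicate part is the simultaneous tuning of the three sequences (cube size $\ell_n$, frequency $k_n$, and the implicit dependence on how small $\|\nabla\bar\rho\|_{L^2(Q_n)}$ can be): one needs the growth factor to beat the local mass of $\nabla\bar\rho$ — which for a generic $H^1_{\loc}$ function could decay or behave wildly along the sequence of shrinking cubes — while simultaneously keeping all subcritical $W^{r,p}$ norms of $v$ summable and $\|v\|_{L^\infty}$ bounded. Handling the case where $\bar\rho$ is merely $H^1_{\loc}$ and non-constant (so that one must \emph{locate} a sequence of cubes near some point where a directional derivative has controlled-from-below mass, using a Lebesgue-point / density argument for $\nabla\bar\rho$) is where the initial-data dependence of $v$ genuinely enters and requires the most care.
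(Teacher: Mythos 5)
Your overall architecture (disjoint shrinking cubes clustering at a point, a Lebesgue-point argument to locate cubes where $\nabla\bar\rho$ has mass bounded below, divergence of $\sum_n\|\nabla\rho(\cdot,t)\|_{L^2(Q_n)}^2$) matches the paper. But the core amplification mechanism does not work as parametrized, and the gap is quantitative, not just technical. A one-shot shear gives gradient growth that is only \emph{linear} in time, of size roughly $t\,a_n k_n/\ell_n$ after rescaling. To make $\sum_n (t\,a_n k_n/\ell_n)^2\,\|\nabla\bar\rho\|_{L^2(Q_n)}^2$ diverge for \emph{every} $t>0$, with $\|\nabla\bar\rho\|_{L^2(Q_n)}^2\sim \ell_n^{d}$ (which is the best the Lebesgue differentiation theorem gives you), you are forced to take $a_nk_n/\ell_n\gtrsim \ell_n^{-d/2}$. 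On the other hand, the correct scaling of the per-cube Sobolev norm is $\|v\|_{\dot W^{r,p}(Q_n)}\sim a_n (k_n/\ell_n)^{r}\ell_n^{d/p}$; with $a_n$ constant and $\ell_n=2^{-n}$, $k_n=n$ this is summable only for $r<d/p$, not $r<d/p+1$ (the extra $+1$ requires the amplitude to scale like $\ell_n$, i.e.\ the natural transport rescaling $v_n=(\lambda_n/\tau_n)u(\cdot/\lambda_n,\cdot/\tau_n)$). Worse, once you impose $a_nk_n/\ell_n\gtrsim\ell_n^{-d/2}$ as required for blow-up, the $\dot W^{r,p}$ sum already diverges for $r$ of order $(d/p+1)/(1+d/2)$, far below the critical threshold. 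So high-frequency shears with bounded amplitude cannot simultaneously produce blow-up at all positive times and regularity up to $r<d/p+1$. The paper's resolution is different in kind: it uses \emph{fixed-frequency} shears giving an $O(1)$ amplification per unit time, iterates to get exponential growth $e^{\alpha t}$ (Proposition~\ref{p:expgrowth}), and then rescales time by $\tau_n=(\log(1/\lambda_n))^{-2}$ so that $e^{t/\tau_n}=\lambda_n^{-t\log(1/\lambda_n)}$ beats $\lambda_n^{d/2}$ for every $t>0$ while costing only a logarithmic factor $1/\tau_n$ in the velocity norms.

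Two further points need repair. First, a cube is \emph{not} invariant under a shear flow: the flow transports the data out of $Q_n$ (with your constant amplitude it leaves $Q_n$ after time $O(\ell_n)$), so the claim that the growth is localized in $Q_n$ fails as stated. The paper handles this by pulling the torus shear back to a closed ``track'' of eight unit cells around $\Omega_0$ (Lemma~\ref{l:growth}) and appending a return flow that rotates the cell carrying the amplified gradient back into $\Omega_0$, at the cost of a factor $8$; this is also why the supports are the enlarged cubes $\tilde Q_n$ of side $7\lambda_n$, which must be chosen disjoint. Second, your Step 1 needs an argument that \emph{some} shear works for an arbitrary non-constant $\bar\rho$: a shear $f(x_j)e_{j'}$ only amplifies via $\partial_{j'}\bar\rho$, which may vanish identically on a given cube, and cross terms may cancel. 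The paper's Lemma~\ref{l:shearsTorus} settles this by summing the gradient energy over all $4d$ sine/cosine shears with both signs (the cross terms cancel in the average and $\sin^2+\cos^2=1$) and pigeonholing; you should make this, or your Riemann--Lebesgue averaging in the fast variable, precise.
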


As mentioned earlier, if $r > d/p + 1$ and $v \in L^\infty( [0, \infty); W^{r, p}(\R^d))$, then the Sobolev embedding theorem implies that~$v$ is Lipschitz in space, uniformly in time.
This in turn implies that~$H^1$ regularity of the initial data is preserved and so the threshold $r < d/p + 1$ above can not be improved.

The main idea behind the proof is as follows:
\begin{enumerate}[label=(\arabic*),topsep=\smallskipamount,itemsep=\smallskipamount]
  \item
    The first step is an elementary observation about periodic functions.
    Take any non-constant periodic function~$\bar \phi$.
    Then, we claim at least one sine or cosine shear flow parallel to one of the coordinate axis must increase the~$H^1$ norm of~$\bar\phi$ by a constant factor (see Lemma~\ref{l:shearsTorus}, below).

  \item
    By localizing and rescaling the above flow, we can obtain a countable (shrinking) family of separated cubes that cluster at one point, so that in each cube the flow increases the $H^1$ norm of the advected scalar by a larger and larger factor (see Section~\ref{s:covering}, below).

  \item 
    Now we need to ensure that the rescaling factors and the location of the cubes can be chosen so that the $H^1$ norm of the solution diverges at any positive time, but the velocity field remains sufficiently regular.
    Our choice  ensures $v \in W^{r, p}$ for every $r$ below the critical Sobolev embedding threshold (i.e.\ $r < d/p + 1$).
\end{enumerate}

The rest of the paper is organized as follows. In Section \ref{s:torus}, we introduce the basic building block in the construction and  show how the building block leads to growth of the Sobolev norms for solutions of the transport equation \eqref{eq:transport}. Then, in Section \ref{s:covering} we conclude the proof of loss of regularity. Lastly, in Section \ref{s:end} we draw some conclusions.

Throughout the paper, we denote the total mass of any measurable (with respect to the $d$-dimensional Lebesgue measure) set $\Omega$ by $|\Omega|$, while $\mathbf{1}_{\Omega}$ denotes the indicator function of the set $\Omega$.
The symbols $\lesssim, \gtrsim$ denote bounds which holds up to a generic constant that may change from line to line.

\section{Construction of the basic flow and growth of Sobolev norms}\label{s:torus}

The aim of this section is to carry out the first step in the proof of the main theorem.
We first prove the elementary observation (Lemma~\ref{l:shearsTorus}, below) that for any non-constant periodic function, at least one sine or cosine shear along a coordinate axis can be used to increase its $H^1$ norm by a constant factor.
Next we lift this construction to compactly supported cubes in $\R^d$, and iterate to obtain  exponential growth in time (Proposition~\ref{p:expgrowth}, below).
This will be the basic building block that will be rescaled and used in subsequent steps in Section~\ref{s:covering}. 

To notationally separate the construction of our building block from the actual rescaled flow in Theorem~\ref{t:mainThm}, in this section we use~$u$ to denote the advecting velocity field on the torus and $\phi$ to denote the passively advected (periodic) scalar with initial data~$\bar \phi$.
For convenience we will work with $8$-periodic functions on the $d$-dimensional torus~$\T^d$ obtained by identifying parallel faces of the cube $[0, 8]^d$.

\begin{lemma}\label{l:shearsTorus}
  Let $A > 0$ and define $f_1, f_2 \colon \R \to \R$ by
  \begin{equation*}
    f_1(z) = A \sin( 2\pi z) \qquad\text{and}\qquad f_2(z) = A \cos(2\pi z)\,,
  \end{equation*}
  and let $\Omega_0 \subseteq \T^d$, $d\geq 2$, be a piecewise $C^1$ domain.
  For any $\bar\phi \in H^1(\T^d)$, $T > 0$, there exists a divergence-free velocity field~$U$ (depending on $\one_{\Omega_0} \bar\phi$ and $T$) such that the following hold:
  \begin{enumerate}
    \item 
      The velocity field~$U$ is a shear flow of the form
      \begin{equation}\label{e:uj}
	U(x) = \pm f_i(x_j) e_{j'}\,,
	\qquad\text{where }
	j' = \begin{cases} j+1 & j < d\,\\
	  1 & j = d\,.
	\end{cases}
      \end{equation}
      Here $e_j \in \R^d$ is the $j^\text{th}$ standard basis vector, and $x_j$ denotes the $j^\text{th}$ coordinate of $x \in \T^d$.

    \item 
      The solution to the transport equation
      \begin{equation} \label{eq:transportU}
	   \partial_t \phi +U\cdot \nabla \phi =0
      \end{equation}
      on $\TT^d$ with initial data~$\bar\phi$ satisfies
      \begin{equation}\label{e:gradThT}
	\norm{\grad \phi(\cdot, T)}_{L^2(\Omega_T)}^2 \geq \paren[\Big]{1 + \frac{2 \pi^2 A^2 T^2}{d} } \norm{\grad \bar\phi}_{L^2(\Omega_0)}^2\,.
      \end{equation}
      Here\ $\Omega_T$ is the image of $\Omega_0$ under the flow map of the shear flow~$U$ after time~$T$.
  \end{enumerate}
\end{lemma}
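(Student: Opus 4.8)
The plan is to reduce the statement to a one-dimensional computation about how a single shear flow acts on a Fourier mode, and then argue that for a non-constant $\bar\phi$ at least one of the $2d$ candidate shears (the flows $\pm f_i(x_j)e_{j'}$) must produce the claimed growth. First I would write the explicit solution of \eqref{eq:transportU}: if $U(x) = f_i(x_j)e_{j'}$, then the flow map after time $T$ sends $x \mapsto x + T f_i(x_j) e_{j'}$ (the $x_j$-coordinate is unchanged, so the shear is steady and integrates trivially), and hence $\phi(x,T) = \bar\phi\big(x - T f_i(x_j)e_{j'}\big)$. Differentiating, the only coordinate of the gradient that changes is $\partial_{x_j}$, which picks up an extra term $-T f_i'(x_j)\,\partial_{x_{j'}}\bar\phi$ evaluated at the pulled-back point, while $\partial_{x_{j'}}\phi$ and all other partials are simply $\partial_{x_{j'}}\bar\phi$ etc.\ composed with the (measure-preserving) flow. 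Changing variables back, one gets, on $\Omega_T$,
\[
  \norm{\grad\phi(\cdot,T)}_{L^2(\Omega_T)}^2
  = \norm{\grad\bar\phi}_{L^2(\Omega_0)}^2
    + T^2 \int_{\Omega_0} f_i'(x_j)^2\,\abs{\partial_{x_{j'}}\bar\phi}^2\,dx
    - 2T \int_{\Omega_0} f_i'(x_j)\,\partial_{x_j}\bar\phi\,\partial_{x_{j'}}\bar\phi\,dx\,.
\]
So the cross term must be handled, and the quadratic term is the source of the gain.

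The second step is to kill the cross term by choosing the phase. For each fixed pair $(j,j')$ I have two candidate shears, one with $f_1(z)=A\sin(2\pi z)$ and one with $f_2(z)=A\cos(2\pi z)$; note $f_1'$ and $f_2'$ are, up to constants, $\cos$ and $\sin$, i.e.\ a quadrature pair. Summing the cross-term integrands over these two choices, or more simply choosing the sign $\pm$ appropriately, lets me assume the cross term is $\le 0$ for the chosen shear — actually the cleanest route is: for a given $(i,j,j')$ the cross term changes sign under $U\mapsto -U$, so one of the two signs makes it $\le 0$. Hence for a suitable sign,
\[
  \norm{\grad\phi(\cdot,T)}_{L^2(\Omega_T)}^2
  \ \ge\ \norm{\grad\bar\phi}_{L^2(\Omega_0)}^2
    + T^2 \int_{\Omega_0} f_i'(x_j)^2\,\abs{\partial_{x_{j'}}\bar\phi}^2\,dx\,.
\]
Since $f_1'(z)^2 + f_2'(z)^2 = 4\pi^2 A^2$ pointwise, averaging the lower bound over $i\in\{1,2\}$ shows that for at least one $i$,
\[
  \int_{\Omega_0} f_i'(x_j)^2\,\abs{\partial_{x_{j'}}\bar\phi}^2\,dx
  \ \ge\ 2\pi^2 A^2 \int_{\Omega_0}\abs{\partial_{x_{j'}}\bar\phi}^2\,dx\,.
\]

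The last step is the pigeonhole over the $d$ directions $j'$. We have $\sum_{j'=1}^d \int_{\Omega_0}\abs{\partial_{x_{j'}}\bar\phi}^2 = \norm{\grad\bar\phi}_{L^2(\Omega_0)}^2$, so there is some $j'$ with $\int_{\Omega_0}\abs{\partial_{x_{j'}}\bar\phi}^2 \ge \tfrac1d\norm{\grad\bar\phi}_{L^2(\Omega_0)}^2$; take the corresponding $j$ (with $j'$ as in \eqref{e:uj}, so $j$ is determined), the corresponding $i$ from the averaging step, and the corresponding sign, and combine with the displayed lower bound to obtain exactly \eqref{e:gradThT}. I expect the main obstacle to be bookkeeping rather than depth: being careful that $f_i(x_j)e_{j'}$ is genuinely divergence-free (it is, since it depends only on $x_j$ and points in the $e_{j'}$ direction with $j'\neq j$, which forces $d\ge2$ — hence that hypothesis), that the flow map is well-defined and measure-preserving on $\T^d$ so the change of variables on $\Omega_T$ is legitimate, and handling the cross term cleanly when $\bar\phi$ is merely $H^1$ (so one should argue by density from smooth $\bar\phi$, or just note all integrals above make sense for $H^1$ functions and the pointwise identity for $\grad\phi(\cdot,T)$ holds a.e.). One subtlety worth a remark: the shear chosen depends on $\bar\phi$ only through $\one_{\Omega_0}\bar\phi$, consistent with the statement, because all the selection integrals are over $\Omega_0$.
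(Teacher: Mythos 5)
Your proposal is correct and follows essentially the same route as the paper: write the explicit shear solution, expand $\norm{\grad\phi(\cdot,T)}_{L^2(\Omega_T)}^2$ into the original term, a cross term, and a quadratic gain term, eliminate the cross term via the sign $\pm$, use $f_1'^2+f_2'^2=4\pi^2A^2$ for the quadrature pair, and pigeonhole over the $d$ directions. The only cosmetic difference is that the paper performs a single averaging/pigeonhole over all $4d$ flows $(-1)^i f_{i'}(x_j)e_{j'}$ at once rather than your three sequential selections, and your computation correctly uses $f_{i'}'$ in the gradient formula where the paper's displayed expression has a typo.
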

\begin{proof}
  Given $i, i' \in \set{1, 2}$ and $j \in \set{1, \dots, d}$, we  let
  \begin{equation*}
    u_{i,i', j}(x) = (-1)^i f_{i'}(x_j) e_{j'},
  \end{equation*}
   and we let~$\phi_{i,i',j}$ be the solution of the transport equation~\eqref{eq:transportU} with vector field $u_{i,i',j}$. We denote by $\Omega_{T, i, i', j}$ the image of $\Omega_0$ under the flow map of the shear flow $u_{i,i',j}$ after time~$T$.
  Since
  \begin{equation*}
    \phi_{i, i', j}(x, t)
      = \bar\phi( x - (-1)^{i} f_{i'}(x_j) \,t \,e_{j'} )\,,
  \end{equation*}
  we compute
  \begin{equation*}
    \partial_k \phi_{i, i', j}
      = \begin{cases}
	  \partial_k \bar \phi - (-1)^i f_{i'}(x_j) t \partial_{j'} \bar\phi  & k = j\,,
	  \\
	  \partial_k \bar \phi & k \neq j\,.
      \end{cases}
  \end{equation*}
  We square the expression above and  sum over $i, i'$. Using the fact that $\sum_{i'} f_{i'}^2 = A^2$, integrating over $\Omega_{T,i,i',j}$, and changing variables back to the original domain $\Omega_0$ gives
  \begin{equation*}
    \sum_{i, i'} \norm{\partial_k \phi_{i, i', j} }_{L^2(\Omega_{T, i, i', j})}^2
    = \begin{cases}
      4 \norm{\partial_j \bar\phi}_{L^2(\Omega_0)}^2 + 8 \pi^2 A^2 T^2 \norm{\partial_{j'} \bar \phi}_{L^2(\Omega_0)}^2 & k = j\,.
      \\
      4 \norm{\partial_k \bar\phi}_{L^2(\Omega_0)}^2 & k \neq j\,,
    \end{cases}
  \end{equation*}
  Summing over~$k \in \set{1, \dots, d}$ and $j \in \set{1, \dots, d}$ then shows that
  \begin{equation*}
    \sum_{i, i', j} \norm{\grad \phi_{i, i', j}}_{L^2(\Omega_{T, i, i', j})}^2
    = 4 d \,\norm{\grad \bar\phi}_{L^2(\Omega_0)}^2 + 8 \pi^2 A^2 T^2 \norm{\grad \bar \phi }_{L^2(\Omega_0)}^2\,.
  \end{equation*}
  Since there are $4d$ terms on the sum on the left, there must exist one term that is  at least a $1/(4d)$ fraction of  the right hand side.
  This immediately yields~\eqref{e:gradThT} as claimed.
\end{proof}

Our next task is to show that for any (non-constant) initial datum, we can find a smooth compactly supported divergence-free vector field in $\R^d$ for which the solution to the transport equation grows exponentially in $H^1$.
This is the main result of this section, and is what will be used in the proof of Theorem~\ref{t:mainThm}.

\begin{proposition}\label{p:expgrowth}
  Let  $\bar\theta \in H^1_{\loc}(\R^d)$, $d\geq 2$, and fix $\alpha > 0$.
  There exist a constant $C(\alpha, d)$ (independent of~$\bar \theta$) and a divergence-free vector field $u \colon \RR^d\times [0,\infty) \to \R^d$ (depending on $\bar\theta$) such that $u$ is piecewise constant in time, supported on the cube $\tilde \Omega_0=(-3,4)^d$, satisfies the bound
  \begin{equation*}
   \sup_{0 \leq \tau < \infty} \norm{u(\cdot, \tau)}_{C^1(\R^d)} \leq C(\alpha,d)\,,
  \end{equation*}
  and the following two assertions hold.
  \begin{enumerate}
    \item
      The unique solution of the transport equation
      \begin{equation}\label{e:transThetaU}
	\partial_t \theta + u \cdot \grad \theta = 0
      \end{equation}
      in $\RR^d$ with initial data $\bar\theta$, satisfies
      \begin{equation*}
	\norm{\grad \theta(\cdot, n)}_{L^2(\Omega_0)}
	  \geq e^{\alpha n} \norm{\grad \bar\theta}_{L^2(\Omega_0)}\,,
      \end{equation*}
      for all non-negative integer times $n \in \N$.
      Here $\Omega_0$ is the cube $(0, 1)^d$ in $\RR^d$;
    \item
      For all times $t \geq 0$, the above solution~$\theta$  satisfies
      \begin{equation}\label{e:expGrowth}
	\norm{\grad \theta(\cdot, t)}_{L^2(\tilde \Omega_0)}
	  \geq e^{\alpha t - \beta} \norm{\grad \bar\theta}_{L^2(\Omega_0)} \,.
      \end{equation}
      Here~$\beta$ is a constant that depends on~$\alpha$ and $d$, but not on~$\bar\theta$.
  \end{enumerate}
\end{proposition}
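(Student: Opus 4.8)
The plan is to bootstrap Lemma~\ref{l:shearsTorus} into exponential growth by iterating it over unit time steps, using a periodization trick so that the torus lemma applies on cubes in $\R^d$. First I would fix the non-constant initial data $\bar\theta$. The quantity $\norm{\grad \bar\theta}_{L^2(\Omega_0)}$ with $\Omega_0 = (0,1)^d$ may be zero (if $\bar\theta$ is constant on that cube), so as a preliminary step I would translate coordinates so that $\bar\theta$ is non-constant on $\Omega_0$; since $\bar\theta$ is non-constant in $H^1_\loc$, some unit cube works, and $\tilde\Omega_0=(-3,4)^d$ gives enough room. Then, on each time interval $[n,n+1]$, I would apply Lemma~\ref{l:shearsTorus} with amplitude $A$ chosen so that $1 + 2\pi^2 A^2 T^2/d \geq e^{2\alpha}$ at $T=1$ — this fixes $A = A(\alpha,d)$, which in turn gives the uniform $C^1$ bound $\norm{u(\cdot,\tau)}_{C^1} \leq C(\alpha,d)$ since $u$ is built from $f_1,f_2$ (and their localizations), whose $C^1$ norms depend only on $A$. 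The shear flow on a cube is obtained by taking the torus shear from the lemma, restricting to one fundamental domain, and multiplying by a fixed smooth cutoff supported in $\tilde\Omega_0$ and equal to $1$ on a neighborhood of $\Omega_0$ and all its iterated images; the cutoff costs only a constant factor in $C^1$.

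The key structural point to verify is that iterating works: at step $n$ the shear flow (parallel to some coordinate axis, chosen adaptively depending on $\one_{\Omega_n}\theta(\cdot,n)$) maps the current cube-shaped region $\Omega_n$ to its image $\Omega_{n+1}$, and \eqref{e:gradThT} gives
\begin{equation*}
  \norm{\grad \theta(\cdot,n+1)}_{L^2(\Omega_{n+1})}^2 \geq e^{2\alpha}\norm{\grad \theta(\cdot,n)}_{L^2(\Omega_n)}^2.
\end{equation*}
Because a shear flow parallel to a coordinate axis maps axis-aligned boxes to sheared boxes, and then the next shear acts on that, I need to track that all these images stay inside $\tilde\Omega_0$ over all time. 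The amplitude $A$ is fixed, so each shear displaces points by at most $A$ in one coordinate over unit time; but the images could drift. The fix is that at the start of each unit interval I reset: I can compose the shear with a rigid translation (still divergence-free, still a legitimate building block modulo adjusting the cutoff) that brings $\Omega_{n+1}$ back to a standard position, or — cleaner — I observe that since everything is on the torus $\T^d \cong [0,8]^d$ with periodic identification, the images wrap around and $8$-periodicity with a $7$-wide support window $\tilde\Omega_0=(-3,4)^d$ is exactly engineered to contain one period. I would state the iteration so that $\Omega_n$ always denotes the relevant piece and invoke the lemma with $\Omega_0$ replaced by $\Omega_n$ at each stage. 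This yields assertion (1): $\norm{\grad\theta(\cdot,n)}_{L^2(\Omega_0)} \geq e^{\alpha n}\norm{\grad\bar\theta}_{L^2(\Omega_0)}$, after noting $\Omega_0 \subseteq \tilde\Omega_0$ and that the final-time region is contained in $\tilde\Omega_0$ so the $L^2$ norm over $\Omega_0$ controls what we need (here I may need $\tilde\Omega_0$ rather than $\Omega_0$ on the left at integer times, or a further cutoff-and-pullback to land back exactly on $\Omega_0$; I would absorb any such constant into $\beta$).

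For assertion (2), interpolating between integer times: for $t \in [n,n+1)$, the flow on that interval only moves things by a bounded amount and changes $\norm{\grad\theta}_{L^2}$ by at most a fixed constant factor $e^{\beta}$ (since $u$ is uniformly $C^1$ and supported in $\tilde\Omega_0$, Gronwall on $\grad\theta$ along characteristics gives $\norm{\grad\theta(\cdot,t)}_{L^2(\tilde\Omega_0)} \geq e^{-\beta}\norm{\grad\theta(\cdot,n)}_{L^2(\tilde\Omega_0)} \geq e^{-\beta}\norm{\grad\theta(\cdot,n)}_{L^2(\Omega_0)}$). Combining with (1) and writing $n = \lfloor t\rfloor \geq t-1$ gives \eqref{e:expGrowth} with $\beta = \beta(\alpha,d)$. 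The main obstacle I anticipate is purely bookkeeping: making precise the claim that the relevant advected region stays inside the fixed cube $\tilde\Omega_0$ for all time and that the "non-constant on $\Omega_0$" normalization survives iteration — i.e. that $\norm{\grad\theta(\cdot,n)}_{L^2(\Omega_n)}$ never degenerates and $\Omega_n$ never escapes the support. The periodization with the carefully chosen $7$-wide window $(-3,4)^d$ versus the $8$-periodicity is the device that makes this clean, so I would lead with setting that up explicitly.
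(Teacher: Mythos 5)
There is a genuine gap, and it is not ``purely bookkeeping'' as you suggest at the end: it is the central construction of the proof. The issue is how to realize the torus shear as a \emph{compactly supported, divergence-free} field on $\R^d$ whose iteration keeps the amplified data in a fixed bounded region. Your proposed device --- restrict the shear to a fundamental domain and multiply by a cutoff equal to $1$ on $\Omega_0$ and its iterated images --- fails: a shear $f_i(x_j)e_{j'}$ is divergence-free precisely because it is independent of $x_{j'}$, so any cutoff that decays in the $x_{j'}$ (flow) direction destroys $\dv u =0$, while a cutoff depending only on $x_j$ leaves the field non-compactly supported along $e_{j'}$ and lets the advected region translate by roughly $A$ per unit time, eventually escaping $\tilde\Omega_0$. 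The ``wrap-around'' you invoke happens only on $\T^d$, not in $\R^d$; the $7$-wide window $(-3,4)^d$ does not periodize anything by itself. The paper's resolution is to observe that the strip swept out by the shear is topologically an annulus, to build a rounded octagonal track $\mathcal A_1\subset(-3,4)^d$ of the same total measure, and to pull the torus shear back through an explicit \emph{area-preserving} diffeomorphism $\varphi_1$ that is the identity on $\Omega_0$; measure preservation is what keeps the pulled-back field divergence-free, and the closed track is what keeps the dynamics compactly supported and recurrent. This construction is absent from your proposal and cannot be replaced by a cutoff.

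A second, related omission is the mechanism for closing the iteration on the \emph{fixed} cube $\Omega_0$. Lemma~\ref{l:shearsTorus} delivers growth on the image $\Omega_T$ of $\Omega_0$, which after one step is smeared across the whole strip/track; to reapply the lemma you must bring a piece carrying a definite fraction of the gradient norm back to $\Omega_0$. Your suggestion of composing with ``a rigid translation'' has the same defect as the cutoff (a constant vector field cannot be truncated to compact support while staying divergence-free). The paper instead partitions the track into eight congruent cells, picks one carrying at least $1/8$ of $\norm{\grad\tilde\theta(\cdot,T)}_{L^2(\mathcal A_1)}^2$, and transports it back to $\Omega_0$ with the pullback of the unit shear $-e_2$ along the track (a genuine rotation of the closed track), absorbing the factor $8$ and the Lipschitz constants of $\varphi_1^{\pm1}$ into the choice of $A=\alpha' C(d)/T$. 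Your treatment of assertion (2) via the uniform $C^1$ bound and Gronwall between integer times is correct and matches the paper, and your preliminary ``translate so $\bar\theta$ is non-constant on $\Omega_0$'' step is unnecessary (if $\norm{\grad\bar\theta}_{L^2(\Omega_0)}=0$ the conclusion is vacuous), but without the track construction and the return flow the main iteration does not close.
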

\begin{remark} \label{r:uSmooth}
  With minor modifications to the proof one can ensure that the velocity field~$u$ in Proposition~\ref{p:expgrowth} is in fact smooth, and satisfies $\norm{u(\cdot, t)}_{C^k} \leq C(\alpha, d, k)$ for all $t \geq 0$.
\end{remark}

The proof of Proposition~\ref{p:expgrowth} consists of two steps.
The first step involves pulling back the shear flow on the torus from Lemma~\ref{l:shearsTorus} to a compactly supported flow in $\R^d$.
We do this in Lemma~\ref{l:growth}, below.
Once this is established, we simply iterate this procedure to obtain exponential growth at integer times.
Since the norm of~$u$ is controlled uniformly in time, the $H^1$ norm at non-integer times can be estimated by giving up a small factor.

\begin{lemma}\label{l:growth}
 Let  $\bar\theta \in H^1_{\loc}(\R^d)$, $d\geq 2$, and fix $T > 0$, $\alpha' > 1$.
 There exists a divergence-free vector field $u$ on $\RR^d\times [0,\infty)$ (depending on $\bar\theta$, $\alpha'$ and~$T$) such that the following hold:
 \begin{enumerate}
   \item The vector field~$u$ is piecewise constant in time, supported on the cube $\tilde \Omega_0=(-3,4)^d$, and satisfies
      \[
       \sup_{0 \leq \tau \leq T} \norm{u(\cdot, \tau)}_{C^1(\R^d)} \leq C(d) \paren[\Big]{ 1 + \frac{\alpha'}{T} } \,,
      \]
      for some dimensional constant $C(d)>0$, that is independent of $\bar\theta$.
  \item 
    The weak solution of the transport equation~\eqref{e:transThetaU}
    in $\RR^d$ with initial data $\bar\theta$ satisfies
    \begin{equation*}
	\norm{\grad \theta(\cdot, T)}_{L^2(\Omega_0)}
	  \geq \alpha' \, \norm{\grad \bar\theta}_{L^2(\Omega_0)}\,,
      \end{equation*}
    where $\Omega_0 = (0, 1)^d \subseteq \R^d$.
 \end{enumerate}
\end{lemma}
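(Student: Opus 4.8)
The plan is to take the shear flow $U$ on the torus $\T^d$ provided by Lemma~\ref{l:shearsTorus}, applied with a suitable amplitude $A$ and time window, and pull it back to a compactly supported flow on $\R^d$ via a fixed change of variables that identifies the fundamental cube $[0,8]^d$ with (a slightly enlarged version of) the support cube $\tilde\Omega_0 = (-3,4)^d$. The key point is that the torus estimate~\eqref{e:gradThT} is purely local in the spatial domain $\Omega_0$: the growth factor $1 + 2\pi^2 A^2 T^2/d$ on the right-hand side only sees $\grad\bar\phi$ restricted to $\Omega_0$, and on the left-hand side the relevant quantity is $\grad\phi$ on the \emph{image} $\Omega_T$ of $\Omega_0$ under the shear flow. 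Because a shear flow parallel to a coordinate axis moves $\Omega_0 = (0,1)^d$ only a bounded distance (at most $A T$ in one coordinate direction, by periodicity one may reduce $\Omega_T$ to lie within a fixed bounded region), we can arrange, by choosing $A$ comparable to $1/T$ times a large constant, that the whole motion stays inside $\tilde\Omega_0$ during $[0,T]$, so that cutting the shear flow off smoothly outside $\tilde\Omega_0$ does not affect the trajectories starting in $\Omega_0$ up to time $T$.

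Concretely, first I would fix $T>0$ and $\alpha'>1$. Apply Lemma~\ref{l:shearsTorus} on $\T^d$ with domain $\Omega_0' \subseteq \T^d$ (the image of $(0,1)^d$ under the identification map), initial data the periodization of $\one_{\Omega_0'}\bar\theta$, the given $T$, and amplitude $A$ chosen so that $2\pi^2 A^2 T^2/d \geq (\alpha')^2$, i.e.\ $A \asymp_d \alpha'/T$; this produces a single sine/cosine shear flow $U = \pm f_i(x_j) e_{j'}$ with $\norm{\grad U}_{C^1} \lesssim_d A \asymp_d \alpha'/T$, hence $\norm{U}_{C^1} \lesssim_d 1 + \alpha'/T$ (the additive $1$ absorbing the $L^\infty$ bound $|U| \leq A$, which is also $\lesssim \alpha'/T$, but we keep the form $1 + \alpha'/T$ for later convenience). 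Pull $U$ back through the fixed affine identification to get $\tilde u$ supported (before cutoff) in a fixed bounded cube, multiply by a fixed smooth cutoff $\chi$ that equals $1$ on the region swept out by trajectories issuing from $\Omega_0$ up to time $T$ and is supported in $\tilde\Omega_0$; the cutoff costs only a dimensional constant in the $C^1$ norm since $\chi$ is fixed once and for all. To make $u$ piecewise constant in time as claimed, I would actually replace the single autonomous shear by its time-average realized in a ``freeze and push'' fashion — or, more simply, note that $U$ is already autonomous, hence trivially piecewise constant in time on $[0,T]$; the piecewise-constant language is only needed later when iterating in Proposition~\ref{p:expgrowth}, so here one may just take $u(\cdot,\tau) = $ the cutoff shear for $\tau \in [0,T]$ and $0$ afterward. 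Then the weak solution $\theta$ of~\eqref{e:transThetaU} with data $\bar\theta$ agrees, on $\Omega_0$ at time $T$, with the pull-back of $\phi(\cdot,T)$ from the torus (the cutoff is invisible to these trajectories), and the change-of-variables Jacobian is a fixed constant, so~\eqref{e:gradThT} transfers to
\[
  \norm{\grad\theta(\cdot,T)}_{L^2(\Omega_0)}^2 \;\geq\; \paren[\Big]{1 + \frac{2\pi^2 A^2 T^2}{d}}\norm{\grad\bar\theta}_{L^2(\Omega_0)}^2 \;\geq\; (\alpha')^2 \norm{\grad\bar\theta}_{L^2(\Omega_0)}^2,
\]
which is the desired estimate after taking square roots.

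The main obstacle — and the step deserving the most care — is the geometric bookkeeping that makes the torus-to-$\R^d$ transfer exact rather than merely approximate: one must verify that (i) the identification map and cutoff $\chi$ can be chosen \emph{independently of $\bar\theta$, $\alpha'$, $T$}, so that the only $\alpha'/T$-dependence in $\norm{u}_{C^1}$ comes from the amplitude $A$; (ii) the trajectories of the cutoff field starting in $\Omega_0$ genuinely remain inside $\{\chi = 1\}$ for all $\tau \in [0,T]$, which forces a constraint relating $A$, $T$, and the size of $\tilde\Omega_0$ relative to $\Omega_0$ — here is where one uses that a coordinate shear displaces points by at most $AT$ in a single direction and that, by the $8$-periodicity chosen in Lemma~\ref{l:shearsTorus}, the displacement can be taken modulo the period so the image $\Omega_T$ stays in a bounded set; and (iii) the weak solution on $\R^d$ is unique and coincides with the pulled-back torus solution on the relevant set, which follows from the $C^1$ (hence Lipschitz) regularity of $u$ and finite speed of propagation. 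Everything else is a routine change of variables and the observation that $\sum_{i'} f_{i'}^2 \equiv A^2$ already did the real work in Lemma~\ref{l:shearsTorus}.
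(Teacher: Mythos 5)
There is a genuine gap, and it sits exactly where you flag the ``geometric bookkeeping'' as the delicate step. Lemma~\ref{l:shearsTorus} gives the growth estimate~\eqref{e:gradThT} on $\Omega_T$, the \emph{image} of $\Omega_0$ under the shear, yet your final display asserts the lower bound for $\norm{\grad\theta(\cdot,T)}_{L^2(\Omega_0)}$. These are not the same set: with $A\asymp_d \alpha'/T$ the shear displaces points by up to $AT\asymp\alpha'$, so $\Omega_T$ is a long, sheared ribbon wrapping around the periodic direction, and on $\Omega_0$ itself the solution at time $T$ is determined by values of the extension $\bar\phi$ coming from \emph{outside} $\Omega_0$, for which there is no gradient lower bound. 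No choice of $A$ ``comparable to $1/T$ times a large constant'' keeps the motion inside $\Omega_0$, because the whole point is that $AT$ must be large for the growth factor $1+2\pi^2A^2T^2/d$ to reach $(\alpha')^2$. The paper closes this gap with an extra stage you omit: after time $T$ the gradient mass lives somewhere on an annular region of measure $8$, one pigeonholes to find a cell carrying at least $1/8$ of it, and then runs a \emph{second}, constant-speed ``rotation'' flow for an integer time $i\le 7$ that transports that cell back onto $\Omega_0$ (absorbing the factor $8$ and the Jacobians into the choice of $A$, and rescaling time by $T/(T+i)$ at the end). This second stage is also the reason the velocity field is genuinely piecewise constant in time, which you dismiss as a triviality.

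A second, related problem is the realization of the torus shear as a compactly supported field on $\R^d$ via ``a fixed affine identification plus a cutoff.'' The trajectories of the shear wrap around the torus in the $e_{j'}$ direction (many times, since the displacement is $\asymp\alpha'$), so under an affine identification they would have to exit one face of the cube and re-enter the opposite face; no cutoff of a straight shear can reproduce that. This is why the paper builds the rounded octagonal track $\mathcal A_1'$ and an area-preserving diffeomorphism $\varphi_1$ from the track to the periodic strip $\mathcal S_1'$, equal to the identity on $\Omega_0$: the periodic direction becomes a closed loop in $\R^2$, so the pulled-back field is honestly compactly supported and divergence-free, at the cost of the fixed constants $\norm{\grad\varphi_1}_{L^\infty}$, $\norm{\grad\varphi_1^{-1}}_{L^\infty}$ that the amplitude $A$ must then beat. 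Your points (i) and (iii) (data-independence of the identification; uniqueness of the weak solution for a Lipschitz field) are fine, but without the annulus construction and the return flow the proof does not go through.
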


\begin{figure}[htb]%
  \noindent\hfil%
  \begin{minipage}{.4\linewidth}
    \centering
    \includegraphics[width=\linewidth]{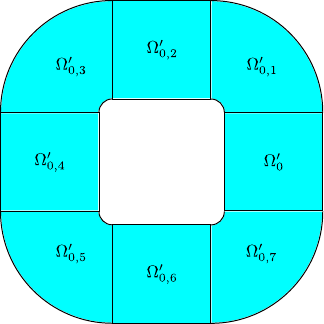}
    \caption{The rounded octagonal track~$\mathcal A_1'$}
    \label{f:octTrack}
  \end{minipage}%
  \hfil%
  \begin{minipage}{.4\linewidth}
    \centering
    \includegraphics[width=\linewidth]{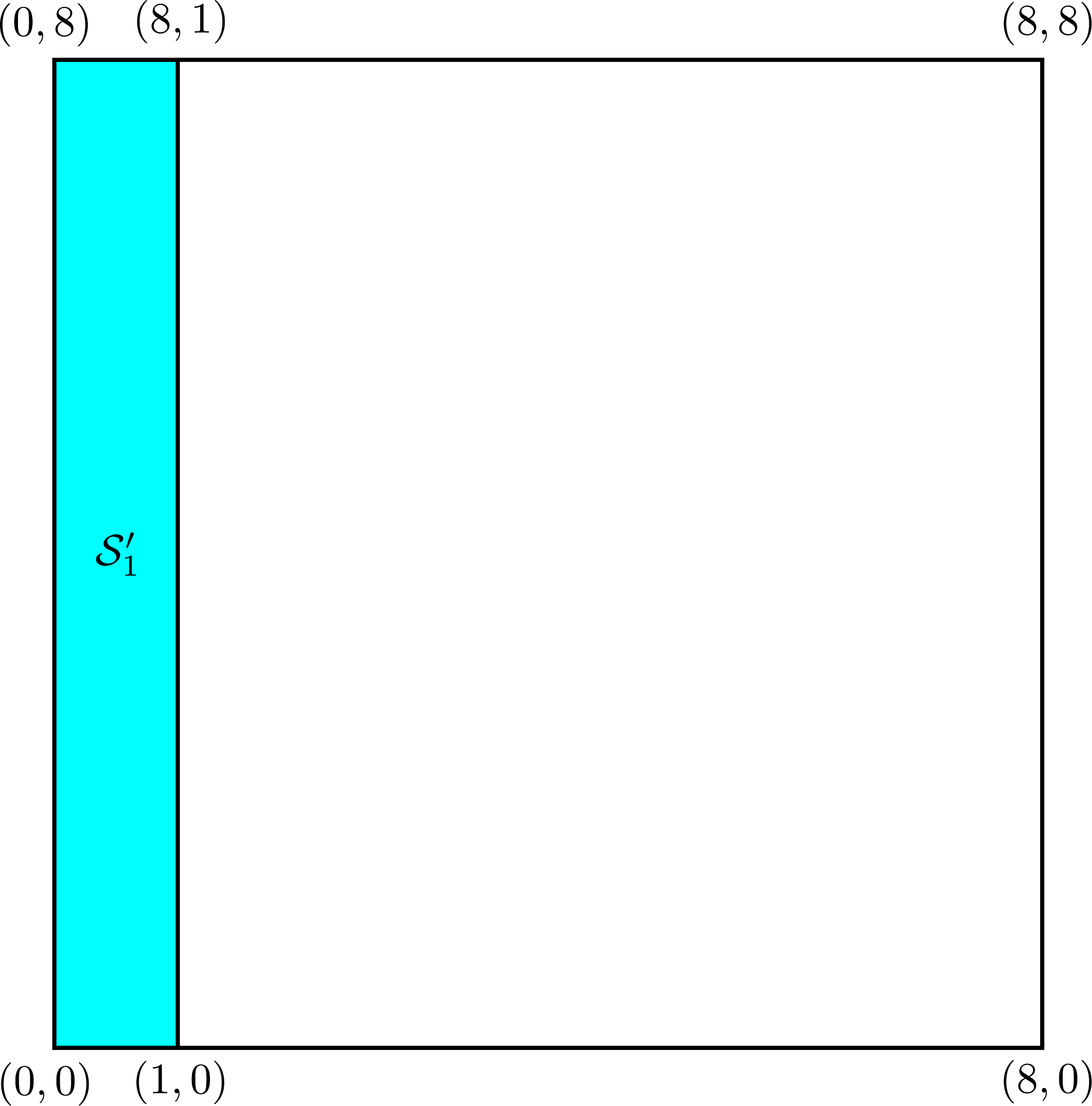}
    \caption{The strip $\mathcal S_1' \subseteq \T^2$.}
    \label{f:strip}
  \end{minipage}%
  \hfil
\end{figure}
The main idea behind the proof of Lemma~\ref{l:growth} is as follows.
Momentarily suppose $d = 2$ and view~$\Omega_0$ as a subset of the two-dimensional torus $\T^2$ obtained by identifying parallel sides of the square $[0, 8]^2$.
Now, by Lemma~\ref{l:shearsTorus}, there is a horizontal or vertical trigonometric shear, $U$, that increases the~$H^1$ norm by a constant factor.
Suppose this shear was vertical.
In this case the flow would spread out the initial data over the vertical strip~$\mathcal S_1'$, shown in Figure~\ref{f:strip}.
The strip $\mathcal S_1' \subseteq \T^2$ is topologically an annulus, and so we can find an annulus $\mathcal A_1' \subseteq \R^2$ (see Figure~\ref{f:octTrack}) and an area preserving diffeomorphism $\mathcal \varphi_1 \colon \mathcal A_1' \to S_1'$ such that $\varphi_1$ is the identity on~$\Omega_0$.
We use~$\varphi_1$ to pullback~$U$ to a vector field~$u$ on~$\mathcal A_1'$.
This velocity field will spread the initial data out in the track~$\mathcal A_1'$.
However, since the area of $\Omega_0$ is one eighth the area of~$\mathcal A_1'$, one can give up a factor of~$8$, perform a radial rotation along the track and ensure that the $H^1$ norm in~$\Omega_0$ itself grows as desired.
We now carry out the details.

\begin{proof}[Proof of Lemma~\ref{l:growth}]
  Let $\mathcal A_1' \subseteq \R^2$ be the rounded octagonal track constructed as follows (see Figure~\ref{f:octTrack}): the region $\Omega_0'$ is the square $(0, 1)^2 \subseteq \R^2$, the regions $\Omega_{0, 2}'$, $\Omega_{0, 4}'$ and $\Omega_{0, 6}'$ are squares of side length~$1$.
  The remaining four regions are quarter annuli with inner radius $\frac{2}{\pi} - \frac{1}{2}$ and outer radius $\frac{2}{\pi} + \frac{1}{2}$.
  These radii are chosen so that the area of each piece is~$1$. We observe that $\mathcal A_1' \subset (-3,4)^2$.

  Let~$\mathcal S_1' = (0, 1) \times (0, 8) \subseteq \T^2$ be the strip of width~$1$ parallel to the~$x_2$ axis (see Figure~\ref{f:strip}).
  Let $\varphi_1' \colon \bar{\mathcal A}_1' \to \bar{\mathcal S}_1'  \subseteq \T^2$ be an area preserving diffeomorphism such that
  \begin{equation*}
    \varphi_1'(x') = x' \qquad\text{for all } x' \in \Omega_0'\,.
  \end{equation*}
  This map can be explicitly constructed by simply deforming each of the quarter annuli into unit squares, and performing the appropriate rotation on each of the squares $\Omega_{0, 2}'$, $\Omega_{0, 4}'$ and $\Omega_{0, 6}'$.

  In $d$-dimensions, we define~$\mathcal A_1 = \mathcal A_1' \times (0, 1)^{d-2} \subseteq \R^d$, and $\mathcal S_1 = \mathcal S_1' \times (0, 1)^{d-2} \subseteq \T^d$. We observe that $\mathcal A_1 \subset (-3,4)^d$.
  We define $\varphi_1 \colon \bar{\mathcal A}_1 \to \bar{\mathcal S}_1$ by
  \begin{equation*}
    \varphi_1(x_1, \dots, x_d) = (\varphi_1'(x_1, x_2), x_3, \dots, x_d)\,,
  \end{equation*}
  and note that $\varphi_1(x) = x$ for all~$x \in (0, 1)^d$.
  Finally, for each~$j \in \set{2, \dots, d-1}$ we repeat the above procedure along the $j^\text{th}$ and $(j+1)^\text{th}$ axis, and for $j = d$ we do the same along the $j^\text{th}$ and $1^\text{st}$ axis.
  This yields the regions~$\mathcal A_j$, and corresponding maps~$\varphi_j \colon \bar{\mathcal A}_j \to \T^d$.

  Now, we let $\bar \phi$ be an~$H^1$ extension of~$(\one_{\Omega_0} \bar\theta) \circ \varphi_1^{-1}$ to $\T^d$.
  We note that our choice of $\varphi_j$ implies $(\one_{\Omega_0} \bar\theta) \circ \varphi_1^{-1} = (\one_{\Omega_0} \bar\theta) \circ \varphi_j^{-1}$ for all $j \in \set{1, \dots, d}$. Let~$A > 0$ be a large constant that will be chosen shortly. By Lemma~\ref{l:shearsTorus} there exist~$j \in \set{1, \dots, d}$ and a shear flow $U$ on~$\T^d$, directed along the $j'$-th coordinate axis, such that $U$ is the form~\eqref{e:uj} and
  \begin{equation*}
    \norm{\grad \phi(\cdot, T)}^2_{L^2(\Omega_T)}
    \geq \paren[\Big]{ 1 + \frac{2 \pi^2 A^2 T^2}{d} } \norm{\grad \bar \phi}^2_{L^2(\Omega_0)}\,.
  \end{equation*}
  Here~$\phi$ is the solution of the transport equation~\eqref{eq:transportU} on $\T^d$ with initial data~$\bar \phi$.
  For simplicity, and without loss of generality, we will now assume~$j = 1$.

  Next, we let~$\tilde{u} \colon \mathcal A_1 \to \R^d$ be the pullback of~$U$ under~$\mathcal \varphi_1$.
  That is, we define
  \begin{equation*}
    \tilde{u} = (\varphi_1^{-1})^*(U) = (D \varphi_1^{-1} U) \circ \varphi_1\,.
  \end{equation*}
  Since~$\varphi_1$ preserves the Lebesgue measure, and $\dv U = 0$ we must also have~$\dv \tilde{u} = 0$.
   Now extend~$\tilde{u}$ to be a~$C^1$ divergence-free vector field supported in~$(-3,4)^d$, and let~$\tilde \theta$ be the solution to the transport equation
  \begin{equation*}
    \partial_t \tilde\theta + \tilde{u} \cdot \grad \theta = 0
  \end{equation*}
  in $\R^d$ with initial data~$\bar \theta$.
  By the construction of~$\tilde{u}$ and the fact that~$\bar \theta = \bar \phi \circ \varphi_1$ on~$\Omega_0$, we must have
  \begin{equation*}
    \tilde\theta(x, t) = \phi(\varphi_1(x), t)
    \qquad\text{for all } x \in \Omega_t\,,
  \end{equation*}
  where~$\Omega_t$ is the image of~$\Omega_0$ under the flow map of~$\tilde{u}$ after time $t$.
  Hence,
  \begin{align}
    \norm{\grad \tilde\theta(\cdot, T)}_{L^2(\mathcal A_1)}^2
      &\geq
	\norm{\grad \varphi_1^{-1}}_{L^\infty}^{-2} \norm{\grad \phi(\cdot, T)}^2_{L^2(\mathcal S_1)}
      \geq \norm{\grad \varphi_1^{-1}}_{L^\infty}^{-2} \norm{\grad \phi(\cdot, T)}^2_{L^2(\Omega_T)}
       \nonumber \\
      &\geq
	\norm{\grad \varphi_1^{-1}}_{L^\infty}^{-2}
	\paren[\Big]{1 + \frac{2\pi^2 A^2 T^2}{d} }
	\norm{\grad \bar \phi}_{L^2(\Omega_0)}^2
      \geq
	\alpha_0'
	\norm{\grad \bar \theta}_{L^2(\Omega_0)}^2, \label{eq:A1Estimate}
  \end{align}
  where
  \begin{equation*}
    \alpha_0' = 
      \norm{\grad \varphi_1^{-1}}_{L^\infty}^{-2}
      \norm{\grad \varphi_1}_{L^\infty}^{-2}
      \paren[\Big]{1 + \frac{2 \pi^2 A^2 T^2}{d} }\,.
  \end{equation*}

  To finish the proof, we need to replace the left hand side of the above with $\norm{\grad \tilde\theta(\cdot, T)}_{L^2(\Omega_0)}$.
  To do this we divide~$\mathcal A_1$ into eight regions of equal measure, and note that on at least one of these regions we must have $\norm{\grad \tilde\theta(\cdot, T)}_{L^2(\Omega_{0, i})}^2 \geq \frac{1}{8} \norm{\tilde\theta(\cdot, T)}_{L^2(\mathcal A_1)}^2$.
  If we now use a flow, $\tilde w$, that shifts this region back to~$\Omega_0$, then we will have the desired inequality.
  We elaborate on this below.

  The flow~$\tilde w$ above can be constructed as follows:
  Let $U = -e_2$, and view $U$ as a flow on the strip~$\mathcal S_1 \subseteq \T^d$.
  Let $\tilde w$ be the pullback of $U_2$ under the map~$\varphi_1$.
  By construction of~$\varphi_1$ we note that for every $i \in \set{0, 7}$, the flow of~$\tilde w$ will map the region~$\Omega_{0, i}$ to the region $\Omega_{0, 0} = \Omega_0$ in time $i$.
  (Here $\Omega_{0, i} = \Omega_{0, i}' \times (0, 1)^{d-2} \subseteq \mathcal A_1$, where $\Omega_{0, i}'$ is shown in Figure~\ref{f:octTrack} and described at the beginning of the proof.)

  From \eqref{eq:A1Estimate}, there must exist $i \in \set{0, \dots, 7}$ such that
  \begin{equation*}
    \norm{\grad \tilde \theta(\cdot, T)}_{L^2(\Omega_{0, i})}^2
      \geq \frac{1}{8} \norm{\grad \tilde \theta(\cdot, T)}_{L^2(\mathcal A_1)}^2
      \geq \frac{\alpha_0'}{8} \norm{\grad \bar \theta}_{L^2(\Omega_0)}^2\,.
  \end{equation*}
  With this~$i$ we define the desired velocity field~$u$ by
  \begin{equation*}
    u(x, t) =
      \begin{dcases}
	\tilde u(x) & 0 \leq t \leq T\,,\\
	\tilde w(x) & T < t \leq T + i\,,
      \end{dcases}
  \end{equation*}
  and let~$\theta$ solve~\eqref{e:transThetaU} with initial data~$\bar \theta$.
  Notice $\theta(\cdot, t) = \tilde \theta( \cdot, t)$ for all $t \in [0, T]$, and
  \begin{equation*}
    \theta(x, T+i) =  \tilde \theta( \tilde \varphi^{-1}_{\tilde w}(x, i) )\,,
  \end{equation*}
  where $\tilde \varphi_{\tilde w}( \cdot, t)$ is the flow map of~$\tilde w$ after time~$t$.
  Consequently,
  \begin{align*}
    \norm{\grad \theta(\cdot, T+i)}_{L^2(\Omega_0)}
      &\geq \norm{\grad \varphi_1^{-1}}_{L^\infty}^{-2}
	\norm{\grad \varphi_1}_{L^\infty}^{-2}
	\norm{\grad \theta(\cdot, T)}_{L^2(\Omega_{0, i})}
    \\
      &\geq \norm{\grad \varphi_1^{-1}}_{L^\infty}^{-2}
	\norm{\grad \varphi_1}_{L^\infty}^{-2}
	\frac{\alpha_0'}{8} \norm{\grad \bar \theta}_{L^2(\Omega_0)}^2
      \geq \alpha' \norm{\grad \bar\theta}_{L^2(\Omega_0)}\,,
  \end{align*}
  provided we choose $A = \alpha' C(d) / T$, for some large dimensional constant $C(d)$ that only depends on $d$.
  Note that
  \begin{equation*}
    \sup_{0 \leq t \leq T+i} \norm{u}_{C^1}
      \leq \max\set{C_1(d) A, C_2(d)}
  \end{equation*}
  for some dimensional constants~$C_1(d)$ and $C_2(d)$.
  Thus rescaling time by a factor of $T / (T + i)$ the velocity field~$u$ satisfies all the conditions in the statement of Lemma~\ref{l:growth}.
  This concludes the proof.
\end{proof}

We conclude this section by repeatedly applying Lemma~\ref{l:growth} to prove Proposition~\ref{p:expgrowth}.
\begin{proof}[Proof of Proposition~\ref{p:expgrowth}]
  We first apply Lemma~\ref{l:growth} with~$T = 1$ and $\alpha' = e^\alpha$ to obtain a velocity field~$u$ such that
  \begin{equation*}
    \norm{\grad \theta(\cdot, 1)}_{L^2(\Omega_0)}
      \geq e^{\alpha} \norm{\grad \bar\theta}_{L^2(\Omega_0)}\,,
    \qquad\text{and}\qquad
    \sup_{0 \leq t \leq 1} \norm{u(\cdot, t)}_{C^1(\R^d)} \leq C(\alpha)\,.
  \end{equation*}
  Now we apply Lemma~\ref{l:growth} starting at time~$1$ with initial data~$\theta( \cdot, 1)$ to obtain a velocity field~$u$ (defined for $1 \leq t \leq 2$) such that
  \begin{equation*}
    \sup_{1 \leq t \leq 2} \norm{u(\cdot, t)}_{C^1(\R^d)} \leq C(\alpha)\,,
  \end{equation*}
  and
  \begin{equation*}
    \norm{\grad \theta(\cdot, 2)}_{L^2(\Omega_0)}
      \geq e^{\alpha} \norm{\grad \theta(\cdot, 1)}_{L^2(\Omega_0)}
      \geq e^{2\alpha} \norm{\grad \bar\theta}_{L^2(\Omega_0)}\,.
  \end{equation*}
  Note that the constant $C(\alpha)$ remained unchanged, since we only apply Lemma~\ref{l:growth} for a time interval of length~$1$.
  Proceeding inductively we obtain the first assertion in Proposition~\ref{p:expgrowth}.

  For the second assertion, we let $n \in \N$ and $t \in [n, n+1)$.
  Since the flow of the velocity field~$u$ preserves the domain~$\tilde \Omega_0$, and since \ $\sup_{0\leq t<\infty} \norm{u}_{C^1} \leq C(\alpha)$, we must have
  \begin{align*}
    \norm{\grad \theta( \cdot, t )}_{L^2(\tilde \Omega_0)}^2
      &\geq \frac{1}{C_1(\alpha)} \norm{\grad \theta(\cdot, n)}_{L^2(\tilde \Omega_0)}^2
      \\
      &\geq \frac{1}{C_1(\alpha)} \norm{\grad \theta(\cdot, n)}_{L^2(\Omega_0)}^2
      \geq \frac{e^{\alpha n}}{C_1(\alpha)} \norm{\grad \bar\theta}_{L^2(\Omega_0)}^2\,,
  \end{align*}
  for some constant $C_1(\alpha)$ that depends on~$\alpha$ but not~$\bar \theta$.
  This immediately implies the second assertion, finishing the proof.
\end{proof}

\section{Loss of regularity for the transport equation} \label{s:covering}

In this section we conclude the proof of Theorem~\ref{t:mainThm}. The basic idea of the proof resembles very closely that in~\cite{ACMReg19}, but with some important differences.

Both proofs entail an iterative construction in which some ``building block'' is replicated on a disjoint family of cubes at smaller spatial scales. The building block in~\cite{ACMReg19} is an optimal mixer from~\cite{ACMMix19}, which enjoys uniform-in-time bounds on the first-order derivatives and decreases the negative norms of a specific advected scalar exponentially in time. By interpolation, the positive norms of the scalar increase exponentially in time, and roughly speaking the iterative construction entails a rescaling in time that makes the exponential increase an instantaneous blow up, still keeping under control the $W^{1,p}$ norm of the vector field for every $p<\infty$. By contrast, in the present proof we rely on the velocity field constructed in Section~\ref{s:torus}, which increases the $H^1$ norm of the advected scalar exponentially in time, but in general it is not mixing.
The advantage of this approach is that higher regularity norms of the velocity field are controlled uniformly in time, and that the growth of the Sobolev norm holds for every (nontrivial) advected scalar with initial data in $H^1$.
We will therefore be able to keep under control higher $W^{r,p}$ norms of the vector field uniformly in time, and to show loss of $H^1$ regularity for every such initial data. In fact, since the construction is local, we need only assume that the initial data is locally in $H^1(\RR^d)$.

The iterative construction becomes however less explicit, since the location and the spatial scale of the family of cubes depend on the initial data, as we need to select the cubes in such a way that the derivative of the initial data is large enough in all of the cubes.

\begin{proof}[Proof of Theorem~\ref{t:mainThm}] We divide the proof in three steps. 

{\em Step 1. Set-up of the geometric construction.} We need to determine a sequence of cubes in $\R^d$ on which we replicate rescaled constructions based on Proposition~\ref{p:expgrowth}. We denote by $Q_n$ a cube of side-length $\lambda_n$ (both the location of the cubes and the side-lengths are to be determined), and we denote by $\tilde Q_n$ the cube with the same center as $Q_n$ and side-length $7\lambda_n$. We will make sure that $\{\tilde Q_n\}$ is a disjoint family contained in a bounded set and it clusters to a point. 

On every $Q_n$ and $\tilde Q_n$ we replicate the construction of the velocity field $u_n$ in Proposition~\ref{p:expgrowth} (we make explicit the dependence of $u_n$ on the index $n$, since the velocity field in Proposition~\ref{p:expgrowth} depends on the initial data), rescaling in space by a factor $\lambda_n$ and in time by a factor $\tau_n$ (which is also to be determined). We neglect a rigid motion, needed to make the cube $Q_n$ concentric and aligned with the cube
$\Omega_0$ in Proposition~\ref{p:expgrowth}, which is irrelevant to compute all needed norms of velocity field and advected scalar. Then we can define the velocity field as a rescaling of the vector field $u_n$ in Proposition~\ref{p:expgrowth}, namely
\begin{equation}\label{e:piecefield}
v_n(x,t) = \frac{\lambda_n}{\tau_n} \, u_n \left(\frac{x}{\lambda_n},\frac{t}{\tau_n}\right)\,,
\end{equation}
and we observe that $v_n$ is supported in the cube $\tilde{Q}_n$.
Next, we let
$$
v = \sum_{n=1}^\infty v_n\,.
$$
Because the $v_n$ are supported in disjoint cubes, it is straightforward to show that $v$ is divergence-free and that $v$ is $C^1$ in space outside of a point in $\RR^d$, which is given by the limit (in the sense of sets) of the cubes $\tilde{Q}_n$ as $n\to \infty$. By Remark \ref{r:uSmooth}, $v$ can be taken smooth outside of this point.
We let $\rho$ be the unique weak solution in $L^\infty([0,T];L^2_{\text{loc}}(\R^d))$ of the transport equation~\eqref{eq:transport} with advecting field $v$ and initial data $\bar\rho$ (notice that $v$ has compact support).

By a scaling computation (as in Section~3.2 of~\cite{ACMReg19}) and using Remark~\ref{r:uSmooth} we see that
$$
\| v(\cdot,t) \|_{\dot{W}^{r,p}(\R^d)} \lesssim \sum_{n=1}^\infty \frac{\lambda_n^\gamma}{\tau_n},
\qquad \forall\,t>0\,,
$$
where
$$
\gamma = 1-r+\frac{d}{p} > 0 \,.
$$
But, thanks to the bound~\eqref{e:expGrowth} provided by Proposition~\ref{p:expgrowth}, for every $n\in \NN$ we have
$$
\| \nabla \rho(\cdot,t) \|_{L^2(\tilde Q_n)} \geq  \exp\left( \frac{\alpha t}{\tau_n} - \beta\right) M_n,
\qquad \forall\,t>0\,,
$$
where we have set 
$$
M_n = \| \nabla \bar\rho \|_{L^2(Q_n)} \,,
$$
Therefore, using the fact that we will select the cubes $\tilde Q_n$ to be disjoint, it follows that
$$
\| \nabla \rho(\cdot,t) \|_{L^2(\R^d)} \gtrsim \sum_{n=1}^\infty \exp\left( \frac{\alpha t}{\tau_n} \right) M_n,
\qquad \forall\,t>0\,.
$$

We conclude that our task is to determine the location of the disjoint cubes $Q_n$ and choose the sequences $\{\lambda_n\}$ and $\{\tau_n\}$ in such a way that
\begin{equation}\label{e:solution1}
\sum_{n=1}^\infty e^{t/\tau_n} M_n = \infty, \qquad \forall\,t>0,
\end{equation}
and
\begin{equation}\label{e:field1}
\sum_{n=1}^\infty \frac{\lambda_n^\gamma}{\tau_n} < \infty, \qquad \forall\,\gamma>0 \,.
\end{equation}

{\em Step 2. Choice of the cubes.} We set $f = | \nabla \bar\rho |^2 \in L^1_{\loc}(\R^d)$, which clearly entails $M_n = \| f \|_{L^1(Q_n)}^{1/2}$. We set 
$$
A_r(x) = \frac{1}{|\mathcal Q_r(x)|} \int_{\mathcal Q_r(x)} f(y) \, dy \,,
$$
where we denote by $\mathcal Q_r(x)$ the cube of side-length $r>0$ centered at $x\in\R^d$, and we set
$$
\tilde D = \left\{ x \in \R^d \;:\; \exists \, \lim_{r \downarrow 0} A_r(x) = f(x) \right\}\,.
$$
By the Lebesgue differentiation theorem we have  $|\R^d \setminus \tilde D|=0$. The assumption that $\bar\rho$ is not a constant function translates into $f \not \equiv 0$, which in turn guarantees the existence of $\bar\delta>0$ and of a bounded set $D \subset \tilde D$, with $|D|>0$, such that
$$
\forall \, x\in D, \qquad \exists \, \lim_{r \downarrow 0} A_r(x) = f(x) \geq \bar \delta > 0\,.
$$
This means that, for every $x\in D$, there exists $\bar r_x>0$ with the property:
$$
\int_{\mathcal Q_r(x)} f(y) \, dy \geq \frac{\bar\delta}{2}  r^d,
\qquad \forall \, 0<r\leq \bar r_x \,.
$$

We can therefore iteratively pick a monotonic sequence $\{\lambda_n\}$ satisfying
\begin{equation}\label{e:lambdaexp}
0 < \lambda_n \leq e^{-n},  \qquad \lambda_n \downarrow 0,
\end{equation} 
and choose the centers $x_n\in D$ of the cubes in such a way that
the cubes $\mathcal Q_{7\lambda_n}(x_n)$ are disjoint and, setting $Q_n = \mathcal Q_{\lambda_n}(x_n)$, we have
\begin{equation}\label{e:Mlarge}
M_n \geq C \lambda_n^{d/2},
\qquad \forall \, n\,.
\end{equation}
The existence of the sequences $\{x_n\}$ and $\{\lambda_n\}$ as above is guaranteed by the fact that 
%
%
we can inductively choose $x_n$ and $\lambda_n>0$ (small enough) to have
$$
\left| D \setminus \bigcup_{k=1}^n \mathcal Q_{7\lambda_k}(x_k) \right| > 0,
\qquad \forall\, n\,.
$$
The fact that $D$ has been chosen to be bounded guarantees that $\{x_n\}$ can be chosen to be a convergent sequence, and $\{\mathcal Q_{7\lambda_n}(x_n)\}$ to be contained in a bounded set.
We conclude that $\{Q_n\}$ is our desired sequence of cubes.

{\em Step 3. Choice of the sequence $\tau_n$ and conclusion.} The lower bound~\eqref{e:Mlarge} shows that the condition~\eqref{e:solution1} for the loss of regularity of the solution holds if
\begin{equation}\label{e:solution2}
\sum_{n=1}^\infty e^{t/\tau_n} \lambda_n^{d/2} = \infty, \qquad \forall\,t>0\,.
\end{equation}
We recall condition~\eqref{e:field1} for the regularity of the velocity field:
\begin{equation}\label{e:field2}
\sum_{n=1}^\infty \frac{\lambda_n^\gamma}{\tau_n} < \infty, \qquad \forall\,\gamma>0\,.
\end{equation}
The sequence $\{\lambda_n\}$ has been implicitly chosen in the previous step to satisfy \eqref{e:lambdaexp}. We now show how it is possible to choose the sequence $\{\tau_n\}$ in such a way that~\eqref{e:solution2} and~\eqref{e:field2} hold. To this end, we set
$$
\tau_n = \left( \log \frac{1}{\lambda_n} \right)^{-2} \,.
$$
The series in condition~\eqref{e:solution2} becomes
\begin{align*}
  \sum_{n=1}^\infty e^{t/\tau_n} \lambda_n^{d/2} 
    &= \sum_{n=1}^\infty \left( e^{\log \frac{1}{\lambda_n}} \right)^{t \log \frac{1}{\lambda_n}} \lambda_n^{d/2} 
  \\
    &= \sum_{n=1}^\infty \left( \frac{1}{\lambda_n} \right)^{t \log \frac{1}{\lambda_n}} \lambda_n^{d/2} 
  = \sum_{n=1}^\infty \lambda_n^{t \log \lambda_n + d/2}\,,
\end{align*}
which diverges since $\lambda_n^{t \log \lambda_n + d/2} \to +\infty$ as $n\to \infty$ for every $t>0$. 

On the other hand, choosing $N=N(\gamma)$ so that 
$$
\left( \log \frac{1}{\lambda_n} \right)^2 \leq \left( \frac{1}{\lambda_n} \right)^{\gamma/2},
\qquad \forall \, n \geq N(\gamma)
$$
(recall that $\lambda_n \downarrow 0$), the series in condition~\eqref{e:field2} can be estimated using~\eqref{e:lambdaexp} as follows:
\begin{align*}
\sum_{n=1}^\infty \frac{\lambda_n^\gamma}{\tau_n}
&= \sum_{n=1}^\infty \left( \log \frac{1}{\lambda_n} \right)^2 \lambda_n^\gamma 
\leq \sum_{n=1}^{N(\gamma)-1} \left( \log \frac{1}{\lambda_n} \right)^2 \lambda_n^\gamma
+ \sum_{n = N(\gamma)}^\infty \lambda_n^{\gamma/2} \\
&\leq \sum_{n=1}^{N(\gamma)-1} \left( \log \frac{1}{\lambda_n} \right)^2 \lambda_n^\gamma
+ \sum_{n = N(\gamma)}^\infty e^{-\gamma n/2} \,,
\end{align*}
which is finite for any $\gamma>0$. This concludes the proof of the  theorem.
\end{proof}

\section{Conclusion} \label{s:end}

In this work, we study properties of weak solutions to a linear transport equation, when the advecting velocity is rough, i.e.,  it has only Sobolev regularity in space.

We extend the results in \cite{ACMReg19} to show that, given any non-constant initial data with square integrable derivative, it is possible to choose the advecting vector field in such a way that the solution loses its regularity instantaneously. To be more precise,  we measure the regularity of the passive scalar in Sobolev spaces and show that all derivatives of the solution of order greater or equal to $1$ blow up in $L^2$ for any $t>0$. This result shows severe ill-posedness in the sense of Hadamard for the transport equation in Sobolev spaces. This result is sharp in the scale of Sobolev spaces, that is, the vector field in our example belongs to all Sobolev spaces that do not embed in the Lipschitz class.

Although the construction is not as explicit as in \cite{ACMReg19}, this example is based on a judicious choice of shear flows acting on the torus, then extended to the full space. Our construction is not universal, in the sense that the advecting field depends on the choice of initial data. It is an open question whether one can construct one single vector field that make the norm of derivatives of the solution blow up for (almost) all initial data. Even though the vector field depends in a strong way on the initial data, the blow-up mechanism described in this work is distinctively linear, since it is based on rescaling and superposing  basic flows and solutions.

\vskip6pt
\enlargethispage{20pt}




\section*{Acknowledgements}

The authors thank Giovanni Alberti for stimulating discussions on loss of regularity for transport equations that led to the problem addressed in this work. They also thank Marco Inversi for a careful reading of the manuscript.




\bibliographystyle{unsrtnat}

\bibliography{DoeringVolume,refs}






\end{document}